\newtheorem{theorem}{Theorem}[section]
\newtheorem{corollary}[theorem]{Corollary}
\newtheorem{lemma}[theorem]{Lemma}
\newtheorem{definition}[theorem]{Definition}
\newtheorem{proposition}[theorem]{Proposition}
\newtheorem{example}[theorem]{Example}
\newtheorem{remark}[theorem]{Remark}
\numberwithin{theorem}{section}
\begin{document}
\title[Completely positive maps on Hilbert $C^{\ast }$-modules]{Comparison
of completely positive maps on Hilbert $C^{\ast }$-modules}
\author{Maria Joi\c{t}a}
\address{Department of Mathematics \\
University of Bucharest\\
Bd. Regina Elisabeta nr. 4-12\\
Bucharest, Romania}
\email{ mjoita@fmi.unibuc.ro}
\urladdr{http://sites.google.com/a/g.unibuc.ro/maria-joita/}
\subjclass[2000]{Primary 46L08, 46L05}
\keywords{completely positive maps, Hilbert $C^{\ast }$-modules, Stinespring
construction}

\begin{abstract}
We introduce a preorder relation in the collection of all operator valued
completely positive maps on a full Hilbert $C^{\ast }$-module and
characterize this relation in terms of the Stinespring construction
associated to each completely positive map.
\end{abstract}

\maketitle

\section{Introduction and preliminaries}

The study of completely positive maps is motivated by their applications to
quantum information theory, where operator valued completely positive maps
on $C^{\ast }$-algebras are used as a mathematical model for quantum
operations, and quantum probability. Many problems from quantum information
theory involve characterization and comparison of quantum operations. The
structure theorems and the Radon-Nikodym type theorems for completely
positive maps between $C^{\ast }$-algebras play an important role to
characterization and comparison of quantum operations and so to
understanding of certain problems from quantum information theory.
Stinespring \cite[Theorem 1]{S} showed that an operator valued completely
positive map $\varphi $ on a unital $C^{\ast }$-algebra $A$ is of the form $%
V_{\varphi }^{\ast }\pi _{\varphi }\left( \cdot \right) V_{\varphi }$, where 
$\pi _{\varphi }$ is a representation of $A$ on a Hilbert space $\mathcal{H}%
_{\varphi }$ and $V_{\varphi }$ is a bounded linear operator (for non-unital
case, see, for example, \cite[Theorem 5.6]{L}). Given two completely
positive maps $\varphi $ and $\psi $ from a $C^{\ast }$-algebra $A$ to $L(%
\mathcal{H})$, $\psi \leq \varphi $ if $\varphi -$ $\psi $ is a completely
positive map from $A$ to $L(\mathcal{H})$. Arveson \cite[Theorem 1.4.2]{2}
showed that, in the unital case, $\psi \leq \varphi $ if and only if there
is a unique positive contraction $\Delta _{\varphi }\left( \psi \right) $ in
the commutant of $\pi _{\varphi }\left( A\right) $, such that $\psi \left(
\cdot \right) =V_{\varphi }^{\ast }\Delta _{\varphi }\left( \psi \right) \pi
_{\varphi }\left( \cdot \right) V_{\varphi }$ (for non-unital case, see, for
example, \cite[Theorem 3.5]{5}). This result can be regarded as a
Radon-Nicodym type theorem for operator valued completely positive maps on $%
C^{\ast }$-algebras and the positive linear operator $\Delta _{\varphi
}\left( \psi \right) $ is called the Radon-Nikodym derivative of $\psi $
with respect to $\varphi $. Asadi \cite{3} and Bhat, Ramesh and Sumesh \cite%
{4} provided a construction which looks like Stinespring's construction for
a class of maps on Hilbert $C^{\ast }$-modules over unital $C^{\ast }$%
-algebras, called operator valued completely positive maps on Hilbert $%
C^{\ast }$-modules. A covariant version of this construction was obtained in 
\cite{J}. In this paper, we will prove a Radon-Nicodym type theorem for
operator valued completely positive maps on Hilbert $C^{\ast }$-modules.

Hilbert $C^{\ast }$-modules are generalizations of Hilbert spaces and $%
C^{\ast }$-algebras. A Hilbert $C^{\ast }$-module $X$ over a $C^{\ast }$%
-algebra $A$ (or a Hilbert $A$-module) is a linear space that is also a
right $A$-module, equipped with an $A$-valued inner product $\left\langle
\cdot ,\cdot \right\rangle $ that is $\mathbb{C}$- and $A$-linear in the
second variable and conjugate linear in the first variable such that $X$ is
complete with the norm $\left\Vert x\right\Vert =\left\Vert \left\langle
x,x\right\rangle \right\Vert ^{\frac{1}{2}}$. If the closed bilateral $\ast $%
-sided ideal $\left\langle X,X\right\rangle $ of $A$ generated by $%
\{\left\langle x,y\right\rangle ;x,y\in X\}$ coincides with $A$, we say that 
$X$ is full.

Given two Hilbert spaces $\mathcal{H}$ and $\mathcal{K}$, the Banach space $%
L(\mathcal{H},\mathcal{K})$ of all bounded linear operators from $\mathcal{H}
$ to $\mathcal{K}$ has a canonical structure of Hilbert $C^{\ast }$-modules
over $L(\mathcal{H})$ with the right module action given by $T\cdot S=TS$
for $T\in $ $L(\mathcal{H},\mathcal{K})$ and $S\in L(\mathcal{H})$ and the
inner product given by $\left\langle T_{1},T_{2}\right\rangle =T_{1}^{\ast
}T_{2}$ for all $T_{1},T_{2}\in L(\mathcal{H},\mathcal{K}).$

\textit{A} \textit{representation }of the Hilbert $A$-module $X$ on the
Hilbert spaces $\mathcal{H}$ and $\mathcal{K}$ is a map $\pi _{X}$ $%
:X\rightarrow L(\mathcal{H},\mathcal{K})$ with the property that there is a $%
\ast $-representation $\pi _{A}$ of $A$ on $\mathcal{H}$ such that 
\begin{equation*}
\left\langle \pi _{X}\left( x\right) ,\pi _{X}\left( y\right) \right\rangle
=\pi _{A}\left( \left\langle x,y\right\rangle \right)
\end{equation*}%
for all $x$ and $y$ in $X$. If $X$ is full, then the underlying $\ast $%
-representation $\pi _{A}$ of $\pi _{X}$ is unique. A representation $\pi
_{X}:X\rightarrow L(\mathcal{H},\mathcal{K})$ of $X$ is \textit{%
nondegenerate,} if $\left[ \pi _{X}(X)\mathcal{H}\right] =\mathcal{K}$ and $%
\left[ \pi _{X}(X)^{\ast }\mathcal{K}\right] =\mathcal{H}$ (here, $[Y]$
denotes the closed subspace of a Hilbert space $Z$ generated by the subset $%
Y\subseteq Z$). Two representations $\pi _{X}:X\rightarrow L(\mathcal{H},%
\mathcal{K})$ and $\pi _{X}^{^{\prime }}:X\rightarrow L(\mathcal{H}%
^{^{\prime }},\mathcal{K}^{^{\prime }})$ are \textit{unitarily equivalent }%
if there are two unitary operators $U_{1}\in L(\mathcal{H},\mathcal{H}%
^{^{\prime }})$ and $U_{2}\in L(\mathcal{K},\mathcal{K}^{^{\prime }})$ such
that $U_{2}\pi _{X}(x)=\pi _{X}^{^{\prime }}(x)U_{1}$ for all $x$ in $X$
(see, for example, \cite{A}).

\textit{An operator valued completely positive map }on\textit{\ }$X$ is a
map $\Phi :X\rightarrow L(\mathcal{H},\mathcal{K})$ with the property that
there is a completely positive map $\varphi :A\rightarrow L(\mathcal{H})$,
such that%
\begin{equation*}
\left\langle \Phi \left( x\right) ,\Phi \left( y\right) \right\rangle
=\varphi \left( \left\langle x,y\right\rangle \right) 
\end{equation*}%
for all $x$ and $y$ in $X$. If $X$ is full, then the completely positive
linear map $\varphi $ associated to $\Phi $ is unique. Throught the paper,
when we say that $\Phi $ is an operator valued completely positive map on $X$%
, we will suppose that its associated complety positive map on $A$ is
denoted by the same small letter $\varphi $. If $\Phi :X\rightarrow L(%
\mathcal{H},\mathcal{K})$ is a completely positive map on $X$, then $\Phi $
is linear and continuous. An operator valued completely positive map\textit{%
\ } $\Phi :X\rightarrow L(\mathcal{H},\mathcal{K})$ is nondegenerate if $%
\left[ \Phi \left( X\right) \mathcal{H}\right] =\mathcal{K}$ and $\left[
\Phi \left( X\right) ^{\ast }\mathcal{K}\right] =\mathcal{H}$.

In \cite[Theorem 2.2 (1)]{J}, we showed that an operator valued completely
positive map $\Phi $ on a full Hilbert $C^{\ast }$-module $X$ is of the form 
$\Phi (\cdot )=W_{\Phi }^{\ast }\pi _{\Phi }(\cdot )V_{\Phi }$, where $\pi
_{\Phi }$ is a representation of $X$ on the Hilbert spaces $\mathcal{H}%
_{\Phi }$ and $\mathcal{K}_{\Phi }$, $W_{\Phi }$ is a coisometry from $%
\mathcal{K}_{\Phi }$ to $\mathcal{K}$ and $V_{\Phi }$ is a bounded linear
operator from $\mathcal{H}$ to $\mathcal{H}_{\Phi }$. Quintuple $\left( \pi
_{\Phi },\mathcal{H}_{\Phi },\mathcal{K}_{\Phi },V_{\Phi },W_{\Phi }\right) $
is called the Stinespring construction associated to $\Phi $. Moreover, $%
\left( \pi _{\varphi },\mathcal{H}_{\Phi },V_{\Phi }\right) $, where $\pi
_{\varphi }$ is the $\ast $-representation of $A$ associated to $\pi _{\Phi
},$ is the Stinespring construction associated to $\varphi $, and under some
conditions the Stinespring construction associated to $\Phi $ is unique up
to unitary equivalence (see \cite[Theorem 2.2 (2)]{J}) in the sense that if $%
\left( \pi ^{\prime },\mathcal{H}^{\prime },\mathcal{K}^{\prime },V^{\prime
},W^{\prime }\right) $ is another quintuple such that $\Phi (\cdot )=\left(
W^{\prime }\right) ^{\ast }\pi ^{\prime }(\cdot )V^{\prime },\left[ \pi
^{\prime }(X)V^{\prime }\mathcal{H}\right] =\mathcal{H}^{\prime }$ and $%
\left[ \pi ^{\prime }(X)^{\ast }W^{\prime }\mathcal{K}\right] =\mathcal{K}%
^{\prime }$, then there are two unitary operators $U_{1}\in L(\mathcal{H}%
_{\Phi },\mathcal{H}^{\prime })$ and $U_{2}\in L(\mathcal{K}_{\Phi },%
\mathcal{K}^{\prime })$ such that, $U_{1}V_{\Phi }=V^{\prime }$, $%
U_{2}W_{\Phi }=W^{\prime }$ and $U_{2}\pi _{\Phi }\left( x\right) =\pi
^{\prime }(x)U_{1}$ for all $x\in X$.

In this paper, we introduce an equivalence relation on the collection of all
operator valued completely positive maps on a full Hilbert $C^{\ast }$%
-module $X$, and we show that the Stinespring constructions associated to
equivalent completely positive maps are unitarily equivalent. Also, we
introduce a preorder relation in the collection of all operator valued
completely positive maps on a full Hilber $C^{\ast }$-module $X$, and prove
a Radon-Nicodym type theorem for operator valued completely positive maps on
Hilbert $C^{\ast }$-modules.

\section{Completely positive maps}

Let $X$ be a full Hilbert $C^{\ast }$-module over a $C^{\ast }$-algebra $A$,
let $\mathcal{H}$ and $\mathcal{K}$ be two Hilbert spaces and $\mathcal{C}%
(X,L(\mathcal{H},\mathcal{K}))=\{\Phi :X\rightarrow L(\mathcal{H},\mathcal{K}%
)$; $\Phi $ is completely positive$\}$.

\begin{definition}
Let $\Phi ,\Psi \in \mathcal{C}(X,L(\mathcal{H},\mathcal{K}))$. We say that $%
\Phi $ is equivalent to $\Psi $, denoted by $\Phi \backsim \Psi $, if $\Phi
\left( x\right) ^{\ast }\Phi \left( x\right) =\Psi \left( x\right) ^{\ast
}\Psi \left( x\right) $ for all $x\in X$.
\end{definition}

\begin{remark}
The relation defined above is an equivalence relation on $\mathcal{C}(X,L(%
\mathcal{H},\mathcal{K}))$.
\end{remark}

\begin{proposition}
Let $\Phi ,\Psi \in \mathcal{C}(X,L(\mathcal{H},\mathcal{K}))$. Then $\Phi
\backsim \Psi $ if and only if there is a partial isometry $V\in L(\mathcal{K%
})$ with $VV^{\ast }=p_{[\Phi \left( X\right) \mathcal{H}]}$ and $V^{\ast
}V=p_{[\Psi (X)\mathcal{H}]}$ such that $\Phi \left( x\right) =V\Psi \left(
x\right) $ for all $x\in X$.
\end{proposition}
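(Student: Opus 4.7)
The plan is to start from the hypothesis $\Phi(x)^{\ast }\Phi(x)=\Psi(x)^{\ast }\Psi(x)$ and first \emph{polarize} it. Since $\Phi$ and $\Psi$ are $\mathbb{C}$-linear, applying the identity to $x+y$ and $x+iy$ and taking linear combinations yields $\Phi(y)^{\ast }\Phi(x)=\Psi(y)^{\ast }\Psi(x)$ for all $x,y\in X$. This is the key algebraic input; everything else will be a routine Stinespring-style construction.

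Next I would define $V_{0}\colon \mathrm{span}\,\Psi(X)H\to \mathrm{span}\,\Phi(X)H$ by $V_{0}\bigl(\sum_{i}\Psi(x_{i})h_{i}\bigr)=\sum_{i}\Phi(x_{i})h_{i}$. The polarized identity gives
\begin{equation*}
\Bigl\|\sum_{i}\Phi(x_{i})h_{i}\Bigr\|^{2}=\sum_{i,j}\langle h_{i},\Phi(x_{i})^{\ast }\Phi(x_{j})h_{j}\rangle =\sum_{i,j}\langle h_{i},\Psi(x_{i})^{\ast }\Psi(x_{j})h_{j}\rangle =\Bigl\|\sum_{i}\Psi(x_{i})h_{i}\Bigr\|^{2},
\end{equation*}
so $V_{0}$ is simultaneously well-defined and isometric. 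It therefore extends by continuity to an isometry $[\Psi(X)H]\to [\Phi(X)H]$, whose image is dense and hence equals $[\Phi(X)H]$; call this unitary $\widetilde V$. Finally, define $V\in L(K)$ to equal $\widetilde V$ on $[\Psi(X)H]$ and $0$ on its orthogonal complement. Then $V$ is a partial isometry with initial projection $p_{[\Psi(X)H]}$ and final projection $p_{[\Phi(X)H]}$, and by construction $V\Psi(x)h=\Phi(x)h$ for every $x\in X$ and $h\in H$, i.e.\ $\Phi(x)=V\Psi(x)$.

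For the converse, assuming $\Phi(x)=V\Psi(x)$ with $V^{\ast }V=p_{[\Psi(X)H]}$, I would simply compute $\Phi(x)^{\ast }\Phi(x)=\Psi(x)^{\ast }V^{\ast }V\Psi(x)$ and observe that $\Psi(x)h\in [\Psi(X)H]$, so $V^{\ast }V$ acts as the identity on the range of $\Psi(x)$; therefore $\Phi(x)^{\ast }\Phi(x)=\Psi(x)^{\ast }\Psi(x)$, giving $\Phi\sim\Psi$.

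The only place where anything could go wrong is verifying well-definedness of $V_{0}$ and the matching of the two projections; both rest entirely on the polarized identity, so the main (and essentially only) obstacle is making the polarization step rigorous in the right order. No deep use of the completely positive hypothesis or of the Stinespring quintuple $(\pi_{\Phi},H_{\Phi},K_{\Phi},V_{\Phi},W_{\Phi})$ seems necessary for this proposition; it is really a statement about any pair of continuous linear maps $X\to L(H,K)$ whose associated positive forms agree.
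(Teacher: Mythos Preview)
Your proof is correct and follows a genuinely more elementary route than the paper's. The paper invokes the full Stinespring quintuples $(\pi_\Phi,H_\Phi,K_\Phi,V_\Phi,W_\Phi)$ and $(\pi_\Psi,H_\Psi,K_\Psi,V_\Psi,W_\Psi)$: it first identifies $\varphi=\psi$, uses Arveson's Lemma~1.4.1 to produce a unitary $U_1:H_\Psi\to H_\Phi$, then builds a unitary $U_2:K_\Psi\to K_\Phi$ between the Stinespring spaces, and finally sets $V=W_\Phi^{\ast}U_2W_\Psi$. Since in that construction $K_\Phi=[\Phi(X)H]$ and $K_\Psi=[\Psi(X)H]$ are realized as subspaces of $K$ with $W_\Phi,W_\Psi$ essentially the projections, the paper's $U_2$ is exactly your $\widetilde V$ and the resulting partial isometry $V$ is the same operator; you simply reach it without the detour through $H_\Phi$, $H_\Psi$, $U_1$, and the representations $\pi_\Phi,\pi_\Psi$.

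What each approach buys: yours isolates the minimal content of the proposition --- as you note, only linearity and the polarized identity $\Phi(y)^{\ast}\Phi(x)=\Psi(y)^{\ast}\Psi(x)$ are used, so the argument applies to any pair of continuous linear maps with matching ``Gram'' forms. The paper's approach, on the other hand, produces along the way the intertwining unitaries $U_1,U_2$ satisfying $U_2\pi_\Psi(x)=\pi_\Phi(x)U_1$, $V_\Phi=U_1V_\Psi$, $W_\Phi=U_2W_\Psi$, which is precisely what is needed for the immediately following Corollary~2.5 (unitary equivalence of the Stinespring constructions). So the extra machinery is not wasted in context, but for Proposition~2.3 in isolation your argument is the cleaner one.
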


\begin{proof}
First, we suppose that $\Phi \backsim \Psi $. Let $\left( \pi _{\Phi },%
\mathcal{H}_{\Phi },\mathcal{K}_{\Phi },V_{\Phi },W_{\Phi }\right) $ be the
Stinesping construction associated to $\Phi $ and $\left( \pi _{\Psi },%
\mathcal{H}_{\Psi },\mathcal{K}_{\Psi },V_{\Psi },W_{\Psi }\right) $ the
Stinespring construction associated to $\Psi $. Since $\Phi \left( x\right)
^{\ast }\Phi \left( x\right) =\Psi \left( x\right) ^{\ast }\Psi \left(
x\right) $ for all $x\in X$, $\varphi =\psi $ and then, by the proof of
Theorem 2.2 in \cite{J} and Lemma 1.4.1 in \cite{2}, there is a unitary
operator $U_{1}\in L(\mathcal{H}_{\Psi },\mathcal{H}_{\Phi })$ such that $%
V_{\Phi }=U_{1}V_{\Psi }$.

From 
\begin{eqnarray*}
\left\langle \pi _{\Phi }\left( x\right) V_{\Phi }h,\pi _{\Phi }\left(
x\right) V_{\Phi }h\right\rangle &=&\left\langle V_{\Phi }^{\ast }\pi _{\Phi
}\left( x\right) ^{\ast }\pi _{\Phi }\left( x\right) V_{\Phi
}h,h\right\rangle \\
&=&\left\langle V_{\Phi }^{\ast }\pi _{\varphi }\left( \left\langle
x,x\right\rangle \right) V_{\Phi }h,h\right\rangle \\
&=&\left\langle \varphi \left( \left\langle x,x\right\rangle \right)
h,h\right\rangle =\left\langle \psi \left( \left\langle x,x\right\rangle
\right) h,h\right\rangle \\
&=&\left\langle \pi _{\Psi }\left( x\right) V_{\Psi }h,\pi _{\Psi }\left(
x\right) V_{\Psi }h\right\rangle
\end{eqnarray*}%
for all $x\in X$ and for all $h\in \mathcal{H}$, and taking into account
that $\mathcal{K}_{\Psi }=[\Psi \left( x\right) \mathcal{H}]=[\pi _{\Psi
}\left( X\right) V_{\Psi }\mathcal{H}]$ and $\mathcal{K}_{\Phi }=[\Phi
\left( x\right) \mathcal{H}]=[\pi _{\Phi }\left( X\right) V_{\Phi }\mathcal{H%
}]$ (see \cite[Theorem 2.2]{J}), we deduce that there is a unitary operator $%
U_{2}:\mathcal{K}_{\Psi }\rightarrow \mathcal{K}_{\Phi }$, such that 
\begin{equation*}
U_{2}\left( \pi _{\Psi }\left( x\right) V_{\Psi }h\right) =\pi _{\Phi
}\left( x\right) V_{\Phi }h
\end{equation*}%
for all $h\in \mathcal{H}$. Moreover, $U_{2}\pi _{\Psi }(x)=\pi _{\Phi
}\left( x\right) U_{1}$ for all $x\in X$, since 
\begin{eqnarray*}
U_{2}\pi _{\Psi }(x)\left( \pi _{\psi }\left( a\right) V_{\Psi }h\right)
&=&U_{2}\left( \pi _{\Psi }\left( xa\right) V_{\Psi }h\right) =\pi _{\Phi
}\left( xa\right) V_{\Phi }h \\
&=&\pi _{\Phi }\left( x\right) \left( \pi _{\varphi }\left( a\right) V_{\Phi
}h\right) =\pi _{\Phi }\left( x\right) U_{1}\left( \pi _{\psi }\left(
a\right) V_{\Psi }h\right)
\end{eqnarray*}%
for all $a\in A$ and for all $h\in \mathcal{H}$, and since $[\pi _{\psi
}\left( A\right) V_{\Psi }\mathcal{H}]=\mathcal{H}_{\Psi }$.

Let $V=W_{\Phi }^{\ast }U_{2}W_{\Psi }$. From 
\begin{equation*}
VV^{\ast }=W_{\Phi }^{\ast }U_{2}W_{\Psi }W_{\Psi }^{\ast }U_{2}^{\ast
}W_{\Phi }=p_{\mathcal{K}_{\Phi }}=p_{[\Phi \left( X\right) \mathcal{H}]}
\end{equation*}%
and 
\begin{equation*}
V^{\ast }V=W_{\Psi }^{\ast }U_{2}^{\ast }W_{\Phi }W_{\Phi }^{\ast
}U_{2}W_{\Psi }=p_{\mathcal{K}_{\Psi }}=p_{[\Psi (X)\mathcal{H}]}\text{,}
\end{equation*}%
we deduce that $V$ is a partial isometry. Moreover,%
\begin{eqnarray*}
\Phi \left( x\right) &=&W_{\Phi }^{\ast }\pi _{\Phi }(x)V_{\Phi }=W_{\Phi
}^{\ast }\pi _{\Phi }(x)U_{1}V_{\Psi }=W_{\Phi }^{\ast }U_{2}\pi _{\Psi
}(x)V_{\Psi } \\
&=&W_{\Phi }^{\ast }U_{2}W_{\Psi }W_{\Psi }^{\ast }\pi _{\Psi }(x)V_{\Psi
}=W_{\Phi }^{\ast }U_{2}W_{\Psi }\Psi \left( x\right) =V\Psi \left( x\right)
\end{eqnarray*}%
for all $x\in X$.

Conversely, suppose that there is a partial isometry $V\in L(\mathcal{K})$
with $VV^{\ast }=p_{[\Phi \left( X\right) \mathcal{H}]}$ and $V^{\ast
}V=p_{[\Psi (X)\mathcal{H}]}$ such that $\Phi \left( x\right) =V\Psi \left(
x\right) $ for all $x\in X$. Then $\Phi \left( x\right) ^{\ast }\Phi \left(
x\right) =\Psi \left( x\right) ^{\ast }V^{\ast }V\Psi \left( x\right) =\Psi
\left( x\right) ^{\ast }\Psi \left( x\right) $ for all $x\in X$, and so $%
\Phi \backsim \Psi $.
\end{proof}

\begin{remark}
Let $\Phi ,\Psi \in \mathcal{C}(X,L(\mathcal{H},\mathcal{K}))$ such that $%
\Phi \backsim \Psi $.$\ $If $\Phi ,\Psi $ are nondegenerate, then there is a
unitary operator $V\in L(\mathcal{K})$ such that $\Phi \left( x\right)
=V\Psi \left( x\right) \ $for all $x\in X$. Indeed, if $\Phi $ and $\Psi $
are nondegenerate, then $[\Phi \left( X\right) \mathcal{H}]=[\Psi \left(
X\right) \mathcal{H}]=\mathcal{K}$, and so $V$ is a unitary operator.
\end{remark}

\begin{corollary}
Let $\Phi ,\Psi \in \mathcal{C}(X,L(\mathcal{H},\mathcal{K}))$. Then $\Phi
\backsim \Psi $ if and only if their Stinespring constructions are unitarily
equivalent.
\end{corollary}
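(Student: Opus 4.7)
The plan is to read off both directions from work already done in the proof of Proposition~2.3 together with one short adjoint computation; essentially no new construction is required.

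For the forward implication, suppose $\Phi \sim \Psi$. The proof of Proposition~2.3 already constructs, under this hypothesis, a unitary $U_{1}\in L(H_{\Psi},H_{\Phi})$ with $U_{1}V_{\Psi}=V_{\Phi}$ (from $\varphi =\psi $ and the uniqueness of the minimal Stinespring dilation on the $C^{\ast}$-algebra $A$) and a unitary $U_{2}\in L(K_{\Psi},K_{\Phi})$ with $U_{2}\pi_{\Psi}(x)=\pi_{\Phi}(x)U_{1}$ for every $x\in X$ (defined first on the dense sets $\pi_{\Psi}(X)V_{\Psi}H$ and $\pi_{\Phi}(X)V_{\Phi}H$, then extended by continuity). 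By definition, this pair realizes a unitary equivalence of the two Stinespring constructions, so nothing further is needed.

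For the converse, assume such unitaries $U_{1},U_{2}$ are given. Using the coisometry identity $W_{\Phi}W_{\Phi}^{\ast}=I_{K_{\Phi}}$ and the defining relation $\pi_{\Phi}(x)^{\ast}\pi_{\Phi}(x)=\pi_{\varphi}(\langle x,x\rangle)$ of a representation of $X$, I compute
\begin{equation*}
\Phi (x)^{\ast }\Phi (x)=V_{\Phi }^{\ast }\pi _{\varphi }(\langle x,x\rangle )V_{\Phi }=\varphi (\langle x,x\rangle ),
\end{equation*}
and analogously $\Psi (x)^{\ast }\Psi (x)=\psi (\langle x,x\rangle )$. Applying $U_{2}\pi _{\Psi }(\cdot )=\pi _{\Phi }(\cdot )U_{1}$ to elements of the form $xa$ with $a\in A$, combined with $U_{1}V_{\Psi }=V_{\Phi }$ and the unitarity of $U_{1}$, forces $\varphi =\psi $ on the ideal $\langle X,X\rangle =A$, and hence $\Phi (x)^{\ast }\Phi (x)=\Psi (x)^{\ast }\Psi (x)$ for every $x$, i.e., $\Phi \sim \Psi $.

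The main point to settle before starting is the intended meaning of \emph{unitarily equivalent Stinespring constructions} here: the reading compatible with Proposition~2.3 requires only the intertwining $U_{2}\pi _{\Psi }(x)=\pi _{\Phi }(x)U_{1}$ and the relation $U_{1}V_{\Psi }=V_{\Phi }$, but imposes no condition linking $U_{2}$ with the coisometries $W_{\Psi },W_{\Phi }$. Since $[\Phi (X)H]$ and $[\Psi (X)H]$ can genuinely differ as subspaces of $K$, a requirement like $U_{2}W_{\Psi }=W_{\Phi }$ would immediately force $\Phi =\Psi $ and trivialize the corollary; with the weaker reading, the statement follows painlessly from Proposition~2.3.
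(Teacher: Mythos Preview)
Your overall strategy matches the paper's: both directions are read off from the unitaries $U_{1},U_{2}$ constructed in the proof of Proposition~2.3, and the paper's converse is the one-word ``Clearly'' in place of your explicit computation.

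Where you diverge is in the final paragraph, and here you have in fact caught a genuine slip in the paper. Contrary to your suggested weaker reading, the paper \emph{does} include the condition $U_{2}W_{\Psi}=W_{\Phi}$ in its notion of unitary equivalence of Stinespring constructions (see the Introduction), and the proof of the corollary explicitly asserts that ``it is not difficult to check that $W_{\Phi}=U_{2}W_{\Psi}$.'' But your worry is exactly right: together with $U_{1}V_{\Psi}=V_{\Phi}$ and $U_{2}\pi_{\Psi}(x)=\pi_{\Phi}(x)U_{1}$, this third relation forces
\begin{equation*}
\Phi(x)=W_{\Phi}^{\ast}\pi_{\Phi}(x)V_{\Phi}=W_{\Psi}^{\ast}U_{2}^{\ast}\,U_{2}\pi_{\Psi}(x)U_{1}^{-1}\,U_{1}V_{\Psi}=W_{\Psi}^{\ast}\pi_{\Psi}(x)V_{\Psi}=\Psi(x),
\end{equation*}
so under the paper's own definition the forward implication would say that $\Phi\sim\Psi$ implies $\Phi=\Psi$, which is refuted by Examples~2.6 and~2.7. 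The corollary is non-trivial and correct precisely under the weaker reading you propose (intertwining of the representations together with $U_{1}V_{\Psi}=V_{\Phi}$, no $W$-condition), and your proof of the converse under that reading is complete. In short: same approach as the paper, but you have correctly flagged that the paper's additional claim $W_{\Phi}=U_{2}W_{\Psi}$ cannot hold unless $\Phi=\Psi$.
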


\begin{proof}
Let $\left( \pi _{\Phi },\mathcal{H}_{\Phi },\mathcal{K}_{\Phi },V_{\Phi
},W_{\Phi }\right) $ and $\left( \pi _{\Psi },\mathcal{H}_{\Psi },\mathcal{K}%
_{\Psi },V_{\Psi },W_{\Psi }\right) $ be the Stinesping constructions
associated to $\Phi $ and $\Psi $. If $U_{2}$ and $U_{1}$ are the unitary
operators defined in the proof of Proposition 2.3, then: $U_{2}\pi _{\Psi
}(x)=\pi _{\Phi }\left( x\right) U_{1}$ for all $x\in X;V_{\Phi
}=U_{1}V_{\Psi }$, and it is not difficult to check that $W_{\Phi
}=U_{2}W_{\Psi }V^{\ast }$. Therefore, $\left( \pi _{\Phi },\mathcal{H}%
_{\Phi },\mathcal{K}_{\Phi },V_{\Phi },W_{\Phi }\right) $ and $\left( \pi
_{\Psi },\mathcal{H}_{\Psi },\mathcal{K}_{\Psi },V_{\Psi },W_{\Psi }\right) $
are unitarily equivalent.

Clearly, if the Stinespring constructions associated to $\Phi $ and $\Psi $
are unitarily equivalent, then $\Phi \left( x\right) ^{\ast }\Phi \left(
x\right) =\Psi \left( x\right) ^{\ast }\Psi \left( x\right) $ for all $x\in
X $, and so $\Phi \backsim \Psi $.
\end{proof}

\begin{example}
It is not difficult to verify that the maps $\Phi ,\Psi :M_{2}(\mathbb{C}%
)\rightarrow L(\mathbb{C}^{2},\mathbb{C}^{4})$ defined by 
\begin{equation*}
\Phi \left( \left[ 
\begin{array}{cc}
a_{11} & a_{12} \\ 
a_{21} & a_{22}%
\end{array}%
\right] \right) =\left[ 
\begin{array}{cc}
\frac{\sqrt{3}}{2}a_{11} & \frac{\sqrt{3}}{2}a_{12} \\ 
\frac{\sqrt{3}}{2}a_{21} & \frac{\sqrt{3}}{2}a_{22} \\ 
\frac{1}{2}a_{11} & \frac{-1}{2}a_{12} \\ 
\frac{1}{2}a_{21} & \frac{-1}{2}a_{22}%
\end{array}%
\right] ,
\end{equation*}%
respectively 
\begin{equation*}
\Psi \left( \left[ 
\begin{array}{cc}
a_{11} & a_{12} \\ 
a_{21} & a_{22}%
\end{array}%
\right] \right) =\left[ 
\begin{array}{cc}
\frac{\sqrt{3}}{2}a_{11} & \frac{\sqrt{3}}{2}a_{12} \\ 
\frac{\sqrt{3}}{2}a_{21} & \frac{\sqrt{3}}{2}a_{22} \\ 
\frac{-1}{2}a_{11} & \frac{1}{2}a_{12} \\ 
\frac{1}{2}a_{21} & \frac{-1}{2}a_{22}%
\end{array}%
\right]
\end{equation*}%
are completely positive with the same underling map $\varphi :M_{2}(\mathbb{C%
})\rightarrow M_{2}(\mathbb{C})\ $given by $\varphi \left( \left[ 
\begin{array}{cc}
a_{11} & a_{12} \\ 
a_{21} & a_{22}%
\end{array}%
\right] \right) =\left[ 
\begin{array}{cc}
a_{11} & \frac{a_{12}}{2} \\ 
\frac{a_{21}}{2} & a_{22}%
\end{array}%
\right] $. So $\Phi \backsim \Psi $. Moreover, $\Phi $ and $\Psi $ are
nondegenerate and 
\begin{equation*}
\Phi \left( \left[ 
\begin{array}{cc}
a_{11} & a_{12} \\ 
a_{21} & a_{22}%
\end{array}%
\right] \right) =\left[ 
\begin{array}{cccc}
1 & 0 & 0 & 0 \\ 
0 & 1 & 0 & 0 \\ 
0 & 0 & -1 & 0 \\ 
0 & 0 & 0 & 1%
\end{array}%
\right] \Psi \left( \left[ 
\begin{array}{cc}
a_{11} & a_{12} \\ 
a_{21} & a_{22}%
\end{array}%
\right] \right) .
\end{equation*}
\end{example}

\begin{example}
A simple calculus shows that the maps $\Phi ,\Psi :M_{2}(\mathbb{C}%
)\rightarrow L(\mathbb{C}^{2},\mathbb{C}^{5})$ defined by 
\begin{equation*}
\Phi \left( \left[ 
\begin{array}{cc}
a_{11} & a_{12} \\ 
a_{21} & a_{22}%
\end{array}%
\right] \right) =\left[ 
\begin{array}{cc}
\sqrt{2}a_{11} & 0 \\ 
0 & \sqrt{3}a_{22} \\ 
0 & 0 \\ 
\sqrt{2}a_{21} & 0 \\ 
0 & \sqrt{3}a_{12}%
\end{array}%
\right] ,
\end{equation*}%
respectively 
\begin{equation*}
\Psi \left( \left[ 
\begin{array}{cc}
a_{11} & a_{12} \\ 
a_{21} & a_{22}%
\end{array}%
\right] \right) =\left[ 
\begin{array}{cc}
a_{11} & -a_{22} \\ 
a_{11} & a_{22} \\ 
0 & \sqrt{3}a_{12} \\ 
\sqrt{2}a_{21} & 0 \\ 
0 & a_{22}%
\end{array}%
\right]
\end{equation*}%
are completely positive with the same underling map $\varphi :M_{2}(\mathbb{C%
})\rightarrow M_{2}(\mathbb{C})\ $given by $\varphi \left( \left[ 
\begin{array}{cc}
a_{11} & a_{12} \\ 
a_{21} & a_{22}%
\end{array}%
\right] \right) =\left[ 
\begin{array}{cc}
2a_{11} & 0 \\ 
0 & 3a_{22}%
\end{array}%
\right] $, and $\Phi $ is degenerate. So $\Phi \backsim \Psi $, and 
\begin{equation*}
\Phi \left( \left[ 
\begin{array}{cc}
a_{11} & a_{12} \\ 
a_{21} & a_{22}%
\end{array}%
\right] \right) =\left[ 
\begin{array}{ccccc}
\frac{\sqrt{2}}{2} & \frac{\sqrt{2}}{2} & 0 & 0 & 0 \\ 
-\frac{\sqrt{3}}{3} & \frac{\sqrt{3}}{3} & 0 & 0 & \frac{\sqrt{3}}{3} \\ 
0 & 0 & 0 & 0 & 0 \\ 
0 & 0 & 0 & 1 & 0 \\ 
0 & 0 & 1 & 0 & 0%
\end{array}%
\right] \Psi \left( \left[ 
\begin{array}{cc}
a_{11} & a_{12} \\ 
a_{21} & a_{22}%
\end{array}%
\right] \right) .
\end{equation*}
\end{example}

\begin{definition}
Let $\Phi ,\Psi \in \mathcal{C}(X,L(\mathcal{H},\mathcal{K}))$. We say that $%
\Phi $ is dominate by $\Psi $, denoted by $\Phi \curlyeqprec \Psi $, if $\
\varphi \leq \psi \ $(in the sense that the $\psi -\varphi $ is a completely
positive  linear map) where  $\varphi $ and $\psi $ are the completely
positive linear maps associated to $\Phi $ respectively $\Psi .$
\end{definition}

\begin{remark}
Let $\Phi ,\Psi ,\Phi _{1},\Phi _{2},\Phi _{3}\in \mathcal{C}(X,L(\mathcal{H}%
,\mathcal{K}))$. Then we have:

\begin{enumerate}
\item $\Phi \curlyeqprec \Phi $ for all $\Phi \in $ $\mathcal{C}(X,L(%
\mathcal{H},\mathcal{K}));$

\item If $\Phi _{1}\curlyeqprec \Phi _{2}$ and $\Phi _{2}\curlyeqprec \Phi
_{3}$, then $\Phi _{1}\curlyeqprec \Phi _{3};$

\item $\Phi \curlyeqprec \Psi $ and $\Psi \curlyeqprec \Phi $ if and only if 
$\Phi \backsim \Psi $.
\end{enumerate}
\end{remark}

For a representation $\pi _{X}$ of $X$ on the Hilbert spaces $\mathcal{H}$
and $\mathcal{K},$ $\pi _{X}\left( X\right) ^{\prime }=\{T\oplus S\in L(%
\mathcal{H}\oplus \mathcal{K});\pi _{X}(x)T=S\pi _{X}(x)$ and $\pi
_{X}(x)^{\ast }S=T\pi _{X}(x)^{\ast }$ for all $x\in X\}$ is a $C^{\ast }$%
-algebra called the commutant of $\pi _{X}$ (see \cite[Lemma 4.3]{A}). If $%
\left( \pi _{X},\mathcal{H},\mathcal{K}\right) $ is nondegenerate, then $%
T\in \pi _{A}\left( A\right) ^{\prime }$ (see \cite[Lemma 4.4]{A}), and
moreover, if $T\oplus S\in \pi _{X}\left( X\right) ^{\prime }$, then $S$ is
unique determined by $T$ (see \cite[Note 4.6]{A}).

\begin{lemma}
Let $\Phi \in \mathcal{C}(X,L(\mathcal{H},\mathcal{K}))$ and let $\left( \pi
_{\Phi },\mathcal{H}_{\Phi },\mathcal{K}_{\Phi },V_{\Phi },W_{\Phi }\right) $
be the Stinesping construction associated to $\Phi $. If $T\oplus S\in \pi
_{\Phi }\left( X\right) ^{\prime }$ is a positive element, then the map $%
\Phi _{T\oplus S}:X\rightarrow L(\mathcal{H},\mathcal{K})$, defined by $\Phi
_{T\oplus S}\left( x\right) =W_{\Phi }^{\ast }\sqrt{S}\pi _{\Phi }(x)\sqrt{T}%
V_{\Phi }$, is completely positive.
\end{lemma}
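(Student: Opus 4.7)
The plan is to produce a completely positive map $\varphi_{T \oplus S}: A \to L(H)$ such that
\[
\langle \Phi_{T \oplus S}(x), \Phi_{T \oplus S}(y) \rangle = \varphi_{T \oplus S}(\langle x, y \rangle)
\]
for all $x, y \in X$; this is exactly what the definition of a completely positive map on the Hilbert $C^{\ast}$-module $X$ requires.

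First I would expand $\Phi_{T\oplus S}(x)^{\ast} \Phi_{T\oplus S}(y)$ directly from the definition. Because $W_\Phi$ is a coisometry from $K$ onto $K_\Phi$, we have $W_\Phi W_\Phi^{\ast} = I_{K_\Phi}$, which collapses the middle of the resulting product to
\[
V_\Phi^{\ast} \sqrt{T}\, \pi_\Phi(x)^{\ast} S\, \pi_\Phi(y) \sqrt{T}\, V_\Phi.
\]
The commutation relation $\pi_\Phi(x)^{\ast} S = T \pi_\Phi(x)^{\ast}$ (from $T \oplus S \in \pi_\Phi(X)'$), together with $\pi_\Phi(x)^{\ast} \pi_\Phi(y) = \pi_A(\langle x, y\rangle)$ for the underlying $\ast$-representation $\pi_A$ of $A$ on $H_\Phi$, rewrites this as
\[
V_\Phi^{\ast} \sqrt{T}\, T\, \pi_A(\langle x, y\rangle) \sqrt{T}\, V_\Phi.
\]
Applying the two commutation relations in the opposite order instead gives $\pi_A(\langle x, y\rangle) T$ in place of $T \pi_A(\langle x, y\rangle)$, so $T$ commutes with every $\pi_A(\langle x, y\rangle)$.

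Next I would promote this to $T \in \pi_A(A)'$. This is where fullness of $X$ enters: the closed linear span $B_0$ of $\{\langle x, y\rangle : x, y \in X\}$ is stable under adjoints and products, and under left and right multiplication by $A$ via $\langle xa, y\rangle = a^{\ast}\langle x, y\rangle$ and $\langle x, ya\rangle = \langle x, y\rangle a$, so it is a closed two-sided $\ast$-ideal of $A$; fullness then forces $B_0 = A$. Continuity therefore places $T$ in the commutant of $\pi_A(A)$, and by the continuous functional calculus so does $\sqrt{T}$. Sliding $\sqrt{T}$ past $\pi_A(\langle x, y\rangle)$ in the expression above yields
\[
\Phi_{T\oplus S}(x)^{\ast} \Phi_{T\oplus S}(y) = (T V_\Phi)^{\ast} \pi_A(\langle x, y\rangle) (T V_\Phi),
\]
so setting $\varphi_{T\oplus S}(a) := (T V_\Phi)^{\ast} \pi_A(a) (T V_\Phi)$ produces a completely positive map on $A$ of Stinespring form that satisfies the required intertwining identity.

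The main obstacle is precisely this promotion of $T$ to the commutant of $\pi_A(A)$; without fullness one would only obtain a compatibility identity on the $C^{\ast}$-subalgebra generated by the inner products. The remaining steps are direct algebraic manipulations built on the coisometry property of $W_\Phi$ and the two commutation relations packaged into $\pi_\Phi(X)'$.
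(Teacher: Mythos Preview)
Your argument is correct and is essentially the paper's own proof: expand the product, collapse $W_\Phi W_\Phi^{\ast}$, use the commutation relations coming from $\pi_\Phi(X)'$, and arrive at $V_\Phi^{\ast} T^{2}\pi_\varphi(\langle x,y\rangle)V_\Phi = \varphi_{T^{2}}(\langle x,y\rangle)$. The only stylistic difference is that the paper shuffles $\sqrt{S}$ and $\sqrt{T}$ directly (relying on $\pi_\Phi(X)'$ being a $C^{\ast}$-algebra, so closed under the continuous functional calculus) and then invokes Arveson's Radon--Nikodym theorem for the complete positivity of $\varphi_{T^{2}}$, whereas you first spell out $T\in\pi_A(A)'$ via fullness and then exhibit $\varphi_{T\oplus S}$ in manifest Stinespring form $(TV_\Phi)^{\ast}\pi_A(\cdot)(TV_\Phi)$.
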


\begin{proof}
Indeed, we have 
\begin{eqnarray*}
\Phi _{T\oplus S}\left( x\right) ^{\ast }\Phi _{T\oplus S}\left( y\right)
&=&V_{\Phi }^{\ast }\sqrt{T}\pi _{\Phi }(x)^{\ast }\sqrt{S}W_{\Phi }W_{\Phi
}^{\ast }\sqrt{S}\pi _{\Phi }(y)\sqrt{T}V_{\Phi } \\
&=&V_{\Phi }^{\ast }\sqrt{T}\pi _{\Phi }(x)^{\ast }S\pi _{\Phi }(y)\sqrt{T}%
V_{\Phi } \\
&=&V_{\Phi }^{\ast }\sqrt{T}\pi _{\Phi }(x)^{\ast }S\sqrt{S}\pi _{\Phi
}(y)V_{\Phi }=V_{\Phi }^{\ast }T^{2}\pi _{\Phi }(x)^{\ast }\pi _{\Phi
}(y)V_{\Phi } \\
&=&V_{\Phi }^{\ast }T^{2}\pi _{\varphi }(\left\langle x,y\right\rangle
)V_{\Phi } \\
&&\text{(cf. \cite[Theorem 1.4.2]{2})} \\
&=&\varphi _{T^{2}}\left( \left\langle x,y\right\rangle \right)
\end{eqnarray*}%
for all $x,y\in X$, and so $\Phi _{T\oplus S}$ is an operator valued
completely positive map.
\end{proof}

\begin{remark}
The completely positive map associated to $\Phi _{T\oplus S}$ is $\varphi
_{T^{2}}$.
\end{remark}

\begin{theorem}
Let $\Psi ,\Phi \in \mathcal{C}(X,L(\mathcal{H},\mathcal{K}))$. If $\ \Psi
\curlyeqprec \Phi $, then there is a unique positive linear operator $\Delta
_{\Phi }\left( \Psi \right) $ in $\pi _{\Phi }\left( X\right) ^{\prime }$
such that $\Psi \backsim \Phi _{\sqrt{\Delta _{\Phi }\left( \Psi \right) }}$.
\end{theorem}

\begin{proof}
Let $\left( \pi _{\Phi },\mathcal{H}_{\Phi },\mathcal{K}_{\Phi },V_{\Phi
},W_{\Phi }\right) $ and $\left( \pi _{\Psi },\mathcal{H}_{\Psi },\mathcal{K}%
_{\Psi },V_{\Psi },W_{\Psi }\right) $ be the Stinspring constructions
associated to $\Phi $ and $\Psi $. If $\ \Psi \curlyeqprec \Phi $, then $%
\psi \leq \varphi $, and by the proof of \cite[Lemma 1.4.1]{2}, there is a
bounded linear operator $J_{\Phi }\left( \Psi \right) :\mathcal{H}_{\Phi
}\rightarrow \mathcal{H}_{\Psi }$, such that 
\begin{equation*}
J_{\Phi }\left( \Psi \right) \left( \pi _{\varphi }\left( a\right) V_{\Phi
}h\right) =\pi _{\psi }\left( a\right) V_{\Psi }h
\end{equation*}%
for all $a\in A$ and for all $h\in \mathcal{H}$. Moreover, $\left\Vert
J_{\Phi }\left( \Psi \right) \right\Vert \leq 1$ and $\psi \left( a\right)
=\varphi _{J_{\Phi }\left( \Psi \right) ^{\ast }J_{\Phi }\left( \Psi \right)
}\left( a\right) $ for all $a\in A$. Since%
\begin{eqnarray*}
\left\langle \tsum_{i=1}^{n}\pi _{\Psi }\left( x_{i}\right) V_{\Psi
}h_{i},\tsum_{i=1}^{n}\pi _{\Psi }\left( x_{i}\right) V_{\Psi
}h_{i}\right\rangle  &=&\tsum_{i,j=1}^{n}\left\langle h_{i},V_{\Psi }^{\ast
}\pi _{\Psi }\left( x_{i}\right) ^{\ast }\pi _{\Psi }\left( x_{j}\right)
V_{\Psi }h_{j}\right\rangle  \\
&=&\tsum_{i,j=1}^{n}\left\langle h_{i},V_{\Psi }^{\ast }\pi _{\Psi }\left(
x_{i}\right) ^{\ast }W_{\Psi }W_{\Psi }^{\ast }\pi _{\Psi }\left(
x_{j}\right) V_{\Psi }h_{j}\right\rangle  \\
&=&\tsum_{i,j=1}^{n}\left\langle h_{i},\Psi \left( x_{i}\right) ^{\ast }\Psi
\left( x_{j}\right) h_{j}\right\rangle =\tsum_{i,j=1}^{n}\left\langle
h_{i},\psi \left( \left\langle x_{i},x_{j}\right\rangle \right)
h_{j}\right\rangle  \\
&\leq &\tsum_{i,j=1}^{n}\left\langle h_{i},\varphi \left( \left\langle
x_{i},x_{j}\right\rangle \right) h_{j}\right\rangle
=\tsum_{i,j=1}^{n}\left\langle h_{i},\Phi \left( x_{i}\right) ^{\ast }\Phi
\left( x_{j}\right) h_{j}\right\rangle  \\
&=&\left\langle \tsum_{i=1}^{n}\pi _{\Phi }\left( x_{i}\right) V_{\Phi
}h_{i},\tsum_{i=1}^{n}\pi _{\Phi }\left( x_{i}\right) V_{\Phi
}h_{i}\right\rangle 
\end{eqnarray*}%
for all $x_{1},...,x_{n}\in X$ and for all $h_{1},...,h_{n}\in H$, and since 
$\left[ \pi _{\Psi }\left( X\right) V_{\Psi }H\right] =K_{\Psi }$, there is
a bounded linear operator $I_{\Phi }\left( \Psi \right) :K_{\Phi
}\rightarrow K_{\Psi }$, such that 
\begin{equation*}
I_{\Phi }\left( \Psi \right) \left( \pi _{\Phi }\left( x\right) V_{\Phi
}h\right) =\pi _{\Psi }\left( x\right) V_{\Psi }h
\end{equation*}%
for all $x\in X$ and for all $h\in H$, and $\left\Vert I_{\Phi }\left( \Psi
\right) \right\Vert \leq 1$.

Let $x\in X$. From%
\begin{eqnarray*}
\left( I_{\Phi }\left( \Psi \right) \pi _{\Phi }\left( x\right) \right)
\left( \pi _{\varphi }\left( a\right) V_{\Phi }h\right) &=&I_{\Phi }\left(
\Psi \right) \left( \pi _{\Phi }\left( xa\right) V_{\Phi }h\right) =\pi
_{\Psi }\left( xa\right) V_{\Psi }h \\
&=&\pi _{\Psi }\left( x\right) \left( \pi _{\psi }\left( a\right) V_{\Psi
}h\right) \\
&=&\pi _{\Psi }\left( x\right) J_{\Phi }\left( \Psi \right) \left( \pi
_{\varphi }\left( a\right) V_{\Phi }h\right)
\end{eqnarray*}%
for all $a\in A$ and for all $h\in \mathcal{H}$, and taking into account
that $\left[ \left( \pi _{\varphi }\left( A\right) V_{\Phi }\mathcal{H}%
\right) \right] =\mathcal{H}_{\Phi }$, we deduce that $I_{\Phi }\left( \Psi
\right) \pi _{\Phi }\left( x\right) =\pi _{\Psi }\left( x\right) J_{\Phi
}\left( \Psi \right) $, and from 
\begin{eqnarray*}
\left( \pi _{\Psi }\left( x\right) ^{\ast }I_{\Phi }\left( \Psi \right)
\right) \left( \pi _{\Phi }\left( y\right) V_{\Phi }h\right) &=&\pi _{\Psi
}\left( x\right) ^{\ast }\pi _{\Psi }\left( y\right) V_{\Psi }h=\pi _{\psi
}\left( \left\langle x,y\right\rangle \right) V_{\Psi }h \\
&=&J_{\Phi }\left( \Psi \right) \left( \pi _{\varphi }\left( \left\langle
x,y\right\rangle \right) V_{\Phi }h\right) \\
&=&J_{\Phi }\left( \Psi \right) \pi _{\Phi }\left( x\right) ^{\ast }\left(
\pi _{\Phi }\left( y\right) V_{\Phi }h\right)
\end{eqnarray*}%
for all $y\in X$ and for all $h\in \mathcal{H}$, and taking into account
that $\left[ \left( \pi _{\Phi }\left( X\right) V_{\Phi }\mathcal{H}\right) %
\right] =\mathcal{K}_{\Phi }$, we deduce that $\pi _{\Psi }\left( x\right)
^{\ast }I_{\Phi }\left( \Psi \right) =J_{\Phi }\left( \Psi \right) \pi
_{\Phi }\left( x\right) ^{\ast }$.

Let $\Delta _{\Phi }\left( \Psi \right) =\Delta _{1\Phi }\left( \Psi \right)
\oplus \Delta _{2\Phi }\left( \Psi \right) $, where $\Delta _{1\Phi }\left(
\Psi \right) =J_{\Phi }\left( \Psi \right) ^{\ast }J_{\Phi }\left( \Psi
\right) $ and $\Delta _{2\Phi }\left( \Psi \right) =$ $I_{\Phi }\left( \Psi
\right) ^{\ast }I_{\Phi }\left( \Psi \right) $. Then we have:%
\begin{eqnarray*}
\Delta _{2\Phi }\left( \Psi \right) \pi _{\Phi }\left( x\right) &=&I_{\Phi
}\left( \Psi \right) ^{\ast }I_{\Phi }\left( \Psi \right) \pi _{\Phi }\left(
x\right) =I_{\Phi }\left( \Psi \right) ^{\ast }\pi _{\Psi }\left( x\right)
J_{\Phi }\left( \Psi \right) \\
&=&\pi _{\Phi }\left( x\right) J_{\Phi }\left( \Psi \right) ^{\ast }J_{\Phi
}\left( \Psi \right) =\pi _{\Phi }\left( x\right) \Delta _{1\Phi }\left(
\Psi \right)
\end{eqnarray*}%
and 
\begin{eqnarray*}
\pi _{\Phi }\left( x\right) ^{\ast }\Delta _{2\Phi }\left( \Psi \right)
&=&\pi _{\Phi }\left( x\right) ^{\ast }I_{\Phi }\left( \Psi \right) ^{\ast
}I_{\Phi }\left( \Psi \right) =J_{\Phi }\left( \Psi \right) ^{\ast }\pi
_{\Psi }\left( x\right) ^{\ast }I_{\Phi }\left( \Psi \right) \\
&=&J_{\Phi }\left( \Psi \right) ^{\ast }J_{\Phi }\left( \Psi \right) \pi
_{\Phi }\left( x\right) ^{\ast }=\Delta _{1\Phi }\left( \Psi \right) \pi
_{\Phi }\left( x\right) ^{\ast }
\end{eqnarray*}%
for all $x\in X$. Therefore, $\Delta _{\Phi }\left( \Psi \right) \in \pi
_{\Phi }\left( X\right) ^{\prime }$ and $0\leq \Delta _{\Phi }\left( \Psi
\right) \leq I$.$\ $Moreover, 
\begin{equation*}
\Phi _{\sqrt{\Delta _{\Phi }\left( \Psi \right) }}\left( x\right) ^{\ast
}\Phi _{\sqrt{\Delta _{\Phi }\left( \Psi \right) }}\left( x\right) =\varphi
_{\Delta _{1\Phi }\left( \Psi \right) }\left( \left\langle x,x\right\rangle
\right) =\psi \left( \left\langle x,x\right\rangle \right) =\Psi \left(
x\right) ^{\ast }\Psi (x)
\end{equation*}%
for all $x\in X$, and so $\Psi \backsim \Phi _{\sqrt{\Delta _{\Phi }\left(
\Psi \right) }}$.

Suppose that there is another positive linear operator $T\oplus S\in \pi
_{\Phi }\left( X\right) ^{\prime }$ such that $\Psi \backsim \Phi _{\sqrt{%
T\oplus S}}$. Then $\Phi _{\sqrt{\Delta _{\Phi }\left( \Psi \right) }%
}\backsim \Phi _{\sqrt{T\oplus S}}$, whence we deduce that $\varphi _{\Delta
_{1\Phi }\left( \Psi \right) }=\varphi _{T}$, and by \cite[Theorem 1.4.2]{2}%
, $\Delta _{1\Phi }\left( \Psi \right) =T$, and so $\Delta _{\Phi }\left(
\Psi \right) =$ $T\oplus S$, since $\pi _{\Phi }$ is nondegenerate (see \cite%
[Note 4.6]{A}).
\end{proof}

The positive linear operator $\Delta _{\Phi }\left( \Psi \right) =\Delta
_{1\Phi }\left( \Psi \right) \oplus \Delta _{2\Phi }\left( \Psi \right) $
will be called \textit{the Radon-Nicodym derivative of \ }$\Psi $\textit{\
with respect to }$\Phi $.

\begin{remark}
\begin{enumerate}
\item If $\Delta _{\Phi }\left( \Psi \right) =\Delta _{1\Phi }\left( \Psi
\right) \oplus \Delta _{2\Phi }\left( \Psi \right) $ is the \textit{%
Radon-Nicodym derivative of \ }$\Psi $\textit{\ with respect to }$\Phi $,
then $\Delta _{1\Phi }\left( \Psi \right) $ is the Radon-Nicodym\textit{\
derivative of }$\psi $\textit{\ with respect to }$\varphi $.

\item If $\Psi _{1}\curlyeqprec \Phi ,$ $\Psi _{2}\curlyeqprec \Phi $ and $%
\Psi _{1}\backsim \Psi _{2}$, then $\Delta _{\Phi }\left( \Psi _{1}\right)
=\Delta _{\Phi }\left( \Psi _{2}\right) .$
\end{enumerate}
\end{remark}

\begin{remark}
If $\Psi \curlyeqprec \Phi $, then the Stinespring construction associated
to $\Psi $ can be recovered by the Stinespring construction associated to $%
\Phi $. Indeed, let $\left( \pi _{\Phi },\mathcal{H}_{\Phi },\mathcal{K}%
_{\Phi },V_{\Phi },W_{\Phi }\right) $ be the Stinespring construction
associated to $\Phi $ and $\Delta _{\Phi }\left( \Psi \right) =\Delta
_{1\Phi }\left( \Psi \right) \oplus \Delta _{2\Phi }\left( \Psi \right) $
the \textit{Radon-Nicodym derivative of }$\Psi $\textit{\ with respect to }$%
\Phi $. Then, the pair $\left( \ker \Delta _{1\Phi }\left( \Psi \right)
,\ker \Delta _{2\Phi }\left( \Psi \right) \right) $ is invariant under $\pi
_{\Phi }$, and so $p_{\text{ker}\Delta _{1\Phi }\left( \Psi \right) }\oplus
p_{\ker \Delta _{2\Phi }\left( \Psi \right) },p_{\mathcal{H}_{\Phi }\ominus 
\text{ker}\Delta _{1\Phi }\left( \Psi \right) }\oplus p_{\mathcal{K}_{\Phi
}\ominus \ker \Delta _{2\Phi }\left( \Psi \right) }\in \pi _{\Phi
}(X)^{\prime }$. Let $p_{1}=p_{\mathcal{H}_{\Phi }\ominus \text{ker}\Delta
_{1\Phi }\left( \Psi \right) }$ and $p_{2}=p_{K_{\Phi }\ominus \ker \Delta
_{2\Phi }\left( \Psi \right) }$. Then, we have:

\begin{enumerate}
\item $p_{1}\sqrt{\Delta _{1\Phi }\left( \Psi \right) }V_{\Phi }\in L(%
\mathcal{H},\mathcal{H}_{\Phi }\ominus $ker$\Delta _{1\Phi }\left( \Psi
\right) );$

\item $p_{2}\pi _{\Phi }\left( \cdot \right) p_{1}$ is a representation of $%
X $ on the Hilbert spaces $\mathcal{H}_{\Phi }\ominus \ker \Delta _{1\Phi
}\left( \Psi \right) $ and $\mathcal{K}_{\Phi }\ominus \ker \Delta _{2\Phi
}\left( \Psi \right) ,$ since%
\begin{equation*}
\left( p_{2}\pi _{\Phi }\left( x\right) p_{1}\right) ^{\ast }p_{2}\pi _{\Phi
}\left( y\right) p_{1}=p_{1}\pi _{\varphi }\left( \left\langle
x,y\right\rangle \right) p_{1}
\end{equation*}%
for all $x,y\in X;$

\item $p_{2}W_{\Phi }\in L(\mathcal{K},\mathcal{K}_{\Phi }\ominus \ker
\Delta _{2\Phi }\left( \Psi \right) )$ is a co-isometry, since 
\begin{equation*}
\left( p_{2}W_{\Phi }\right) \left( p_{2}W_{\Phi }\right) ^{\ast
}=p_{2}W_{\Phi }W_{\Phi }^{\ast }p_{2}=p_{\mathcal{K}_{\Phi }\ominus \ker
\Delta _{2\Phi }\left( \Psi \right) };
\end{equation*}

\item 
\begin{eqnarray*}
\left[ p_{2}\pi _{\Phi }\left( X\right) p_{1}\left( p_{1}\sqrt{\Delta
_{1\Phi }\left( \Psi \right) }V_{\Phi }\right) \mathcal{H}\right] &=&\left[
p_{2}\pi _{\Phi }\left( X\right) \sqrt{\Delta _{1\Phi }\left( \Psi \right) }%
V_{\Phi }\mathcal{H}\right] \\
&=&\left[ p_{2}\sqrt{\Delta _{2\Phi }\left( \Psi \right) }\pi _{\Phi }\left(
X\right) V_{\Phi }\mathcal{H}\right] \\
&=&\left[ p_{2}\sqrt{\Delta _{2\Phi }\left( \Psi \right) }\mathcal{K}_{\Phi }%
\right] =\mathcal{K}_{\Phi }\ominus \ker \Delta _{2\Phi }\left( \Psi \right)
;
\end{eqnarray*}

\item 
\begin{equation*}
\left[ p_{1}\pi _{\Phi }\left( X\right) ^{\ast }p_{2}\left( p_{2}W_{\Phi
}\right) \mathcal{K}\right] =\left[ p_{1}\pi _{\Phi }\left( X\right) ^{\ast
}W_{\Phi }\mathcal{K}\right] =\left[ p_{1}\mathcal{H}_{\Phi }\right] =%
\mathcal{H}_{\Phi }\ominus \text{ker}\Delta _{1\Phi }\left( \Psi \right) ;
\end{equation*}

\item 
\begin{eqnarray*}
\Psi \left( x\right) &=&\Phi _{\sqrt{\Delta _{\Phi }\left( \Psi \right) }%
}\left( x\right) =W_{\Phi }^{\ast }\pi _{\Phi }\left( x\right) \sqrt{\Delta
_{1\Phi }\left( \Psi \right) }V_{\Phi } \\
&=&W_{\Phi }^{\ast }p_{2}\pi _{\Phi }\left( x\right) p_{1}\sqrt{\Delta
_{1\Phi }\left( \Psi \right) }V_{\Phi } \\
&=&\left( p_{2}W_{\Phi }\right) ^{\ast }p_{2}\pi _{\Phi }\left( x\right)
p_{1}\left( p_{1}\sqrt{\Delta _{1\Phi }\left( \Psi \right) }V_{\Phi }\right)
\end{eqnarray*}
\end{enumerate}

for all $x\in X$. Therefore, 
\begin{equation*}
\left( p_{2}\pi _{\Phi }\left( \cdot \right) p_{1},\mathcal{H}_{\Phi
}\ominus \ker \Delta _{1\Phi }\left( \Psi \right) ,\mathcal{K}_{\Phi
}\ominus \ker \Delta _{2\Phi }\left( \Psi \right) ,p_{1}\sqrt{\Delta _{1\Phi
}\left( \Psi \right) }V_{\Phi },p_{2}W_{\Phi }\right)
\end{equation*}%
is unitarily equivalent to the Stinespring construction associated to $\Psi
. $
\end{remark}

For $\Phi \in \mathcal{C}(X,L(\mathcal{H},\mathcal{K}))$, let $\widehat{\Phi 
}$ $=\{\Psi \in \mathcal{C}(X,L(\mathcal{H},\mathcal{K}));\Phi \backsim \Psi
\}$. Let $\Phi ,\Psi \in \mathcal{C}(X,L(\mathcal{H},\mathcal{K}))$. We say
that $\widehat{\Psi }\leq \widehat{\Phi }$ if $\ \Psi \curlyeqprec \Phi .$

For $\Phi \in \mathcal{C}(X,L(\mathcal{H},\mathcal{K}))$, let $[0,\widehat{%
\Phi }]=\{\widehat{\Psi };\Psi \in \mathcal{C}(X,L(\mathcal{H},\mathcal{K})),%
\widehat{\Psi }\leq \widehat{\Phi }\}$, and $[0,I]_{\Phi }=\{T\oplus S\in
\pi _{\Phi }\left( X\right) ^{\prime };0\leq T\oplus S\leq I\}.$

\begin{theorem}
Let $\Phi \in \mathcal{C}(X,L(\mathcal{H},\mathcal{K}))$. The map $[0,%
\widehat{\Phi }]\ni \widehat{\Psi }\rightarrow \Delta _{\Phi }\left( \Psi
\right) \in \lbrack 0,I]_{\Phi }$ is an order-preserving isomorphism.
\end{theorem}

\begin{proof}
By Theorem 2.12, the map $[0,\widehat{\Phi }]\ni \widehat{\Psi }\rightarrow
\Delta _{\Phi }\left( \Psi \right) \in \lbrack 0,I]_{\Phi }$ is well
defined. Let $\Psi _{1},\Psi _{2}\in \mathcal{C}(X,L(\mathcal{H},\mathcal{K}%
))$ such that $\Psi _{1}\curlyeqprec \Phi ,$ $\Psi _{2}\curlyeqprec \Phi $
and $\Delta _{\Phi }\left( \Psi _{1}\right) =\Delta _{\Phi }\left( \Psi
_{2}\right) $. Then $\Psi _{1}\backsim \Phi _{\sqrt{\Delta _{\Phi }\left(
\Psi _{1}\right) }}=\Phi _{\sqrt{\Delta _{\Phi }\left( \Psi _{2}\right) }%
}\backsim \Psi _{2},$ and so the map is injective.

Let $T\oplus S\in \lbrack 0,I]_{\Phi }$.$\ $Then $\Phi _{\sqrt{T\oplus S}%
}\in \mathcal{C}(X,L(\mathcal{H},\mathcal{K}))\ $and $\Phi _{\sqrt{T\oplus S}%
}$ $\curlyeqprec \Phi $, since, $T\in \pi _{\varphi }(A)^{\prime }$, $0\leq
T\leq I$, and by \cite[Theorem 1.4.2]{2}, $\Phi _{\sqrt{T\oplus S}}\left(
x\right) ^{\ast }\Phi _{\sqrt{T\oplus S}}\left( x\right) =\varphi _{T}\left(
\left\langle x,x\right\rangle \right) \leq $ $\varphi \left( \left\langle
x,x\right\rangle \right) =\Phi \left( x\right) ^{\ast }\Phi \left( x\right) $
for all $x\in X$. Moreover, since $\Delta _{\varphi }\left( \varphi
_{T}\right) =T$, by Remark 2.13(1), $\Delta _{\Phi }\left( \Phi _{\sqrt{%
T\oplus S}}\right) =T\oplus S$. Therefore, the map is surjective.

If $\widehat{\Psi _{1}}\leq \widehat{\Psi _{2}}$, then $\Psi
_{1}\curlyeqprec \Psi _{2}\curlyeqprec \Phi $, whence it follows that $\psi
_{1}\leq \psi _{2}\leq \varphi $ and, by \cite[Theorem 1.4.2]{2}, $\Delta
_{1\Phi }\left( \Psi _{1}\right) \leq \Delta _{1\Phi }\left( \Psi
_{2}\right) $. From this fact, and taking into account that $\pi _{\Phi }$
is nondegenerate, we deduce that $\Delta _{\Phi }\left( \Psi _{1}\right)
\leq \Delta _{\Phi }\left( \Psi _{2}\right) $ (see \cite[Note 4.6]{A}).
Conversely, if $0\leq T_{1}\oplus S_{1}\leq T_{2}\oplus S_{2}\leq I$, $%
T_{1}\oplus S_{1},T_{2}\oplus S_{2}\in \pi _{\Phi }\left( X\right) ^{\prime
} $, then $0\leq T_{1}\leq T_{2}\leq I$, $T_{1},T_{2}\in \pi _{\varphi
}\left( A\right) ^{\prime }$, whence it follows that $\varphi _{T_{1}}\leq $ 
$\varphi _{T_{2}}$, and so $\Phi _{\sqrt{T_{1}\oplus S_{1}}}\curlyeqprec
\Phi _{\sqrt{T_{1}\oplus S_{1}}}.$
\end{proof}

A completely positive map $\Phi $ on $X$ is \textit{pure} if for any
completely positive map $\Psi $ on $X$ with $\widehat{\Psi }\leq \widehat{%
\Phi }$, there is $\lambda >0$ such that $\Psi \backsim \lambda \Phi $.

\begin{proposition}
Let $\Phi $ be a non-zero element in $\mathcal{C}(X,L(\mathcal{H},\mathcal{K}%
))$. Then $\Phi $ is pure if and only if $\pi _{\Phi }$ is irreducible.
\end{proposition}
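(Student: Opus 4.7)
The plan is to reduce the statement to a direct consequence of the Radon--Nicodym bijection of Theorem 2.13, which identifies the order interval $[0,\Phi]$ with $[0,I]_{\Phi}\subseteq\pi_{\Phi}(X)^{\prime}$. Under this identification, the class $[\lambda\Phi]$ corresponds to $\lambda^{2}I\oplus\lambda^{2}I$, because $\lambda\Phi=\Phi_{\lambda I}=\Phi_{\sqrt{\lambda^{2}I}}$ by Lemma 2.12(2). So purity of $\Phi$ amounts exactly to saying that every positive element in the unit ball of $\pi_{\Phi}(X)^{\prime}$ is a scalar multiple of the identity, and since a $C^{\ast}$-algebra is linearly spanned by its positive unit ball, this is equivalent to irreducibility of $\pi_{\Phi}$.

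For the direction \emph{pure $\Rightarrow$ irreducible}, I would pick an arbitrary positive $T\oplus S\in\pi_{\Phi}(X)^{\prime}$ and, after rescaling, assume $T\oplus S\leq I$. Then $\Phi_{\sqrt{T}}\curlyeqprec\Phi$ by Theorem 2.13, so purity yields some $\lambda>0$ with $[\Phi_{\sqrt{T}}]=[\lambda\Phi]=[\Phi_{\sqrt{\lambda^{2}I}}]$. Injectivity of the map in Theorem 2.13 gives $T=\lambda^{2}I$, and the commutant condition $\pi_{\Phi}(x)T=S\pi_{\Phi}(x)$ together with non-degeneracy of $\pi_{\Phi}$ forces $S=\lambda^{2}I$ as well. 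Thus every positive element of $\pi_{\Phi}(X)^{\prime}$ is a scalar multiple of $I_{H_{\Phi}}\oplus I_{K_{\Phi}}$, i.e.\ $\pi_{\Phi}$ is irreducible.

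For the converse, I would take a nonzero $\Psi\curlyeqprec\Phi$ and form its Radon--Nicodym derivative $\Delta_{\Phi}(\Psi)=\Delta_{1\Phi}(\Psi)\oplus\Delta_{2\Phi}(\Psi)\in[0,I]_{\Phi}$. Irreducibility of $\pi_{\Phi}$ forces $\Delta_{1\Phi}(\Psi)=\mu I$ for some $\mu\in[0,1]$, and the proof of Theorem 2.13 then gives $\Psi\backsim\Phi_{\sqrt{\mu I}}=\sqrt{\mu}\,\Phi$ via Lemma 2.12(2). Non-vanishing of $\Psi$ forces $\mu>0$, so setting $\lambda=\sqrt{\mu}$ yields $[\Psi]=[\lambda\Phi]$, proving purity.

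The main bookkeeping hurdle is confirming that a scalar on $H_{\Phi}$ forces the matching scalar on $K_{\Phi}$, so that ``$T$ is a scalar'' does translate to ``$T\oplus S\in\mathbb{C}(I_{H_{\Phi}}\oplus I_{K_{\Phi}})$''. This step depends crucially on the non-degeneracy of $\pi_{\Phi}$ and on the unique determination of $S$ by $T$ recorded in Remark 2.11; without them, the commutant could in principle contain diagonal pairs whose two entries are unrelated scalars, and these would not correspond to scalar multiples of $\Phi$, breaking the equivalence.
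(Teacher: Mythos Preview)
Your proposal is correct and follows essentially the same route as the paper's proof: both directions run through the Radon--Nikodym bijection of Theorem~2.13, identifying purity with the triviality of $\pi_{\Phi}(X)'$. The only cosmetic differences are that the paper invokes Arveson's Theorem~1.4.2 directly to conclude $T=\lambda^{2}I$ (rather than phrasing it as injectivity of the bijection in Theorem~2.13), and cites \cite{A} for the passage from $T=\lambda^{2}I$ to $S=\lambda^{2}I$ where you spell out the non-degeneracy argument via Remark~2.11.
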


\begin{proof}
First, we suppose that $\Phi $ is pure. Let $T\oplus S\in \pi _{\Phi }\left(
X\right) ^{\prime }$ with $0\leq T\oplus S\leq I$. Then, by Theorem 2.15, $%
\Phi _{\sqrt{T\oplus S}}\curlyeqprec \Phi $, and since $\Phi $ is pure,
there is a positive number $\lambda $ such that $\Phi _{\sqrt{T\oplus S}%
}\backsim \lambda \Phi =\Phi _{\lambda I}$, whence $T\oplus S=\lambda ^{2}I$%
. Therefore, $\pi _{\Phi }\left( X\right) ^{\prime }=\mathbb{C}I$, and by 
\cite[Proposition 4.5]{A}, $\pi _{\Phi }$ is irreducible.

Conversely, let $\Psi \in \mathcal{C}(X,L(\mathcal{H},\mathcal{K}))$ such
that $\widehat{\Psi }\leq \widehat{\Phi }$. Then, by Theorem 2.15, there is $%
\lambda I\in \pi _{\Phi }\left( X\right) ^{\prime }=\mathbb{C}I$ with $%
\lambda >0$, such that $\Psi \backsim \Phi _{\sqrt{\lambda }I}=\sqrt{\lambda 
}\Phi $, and so $\Phi $ is pure.
\end{proof}

\end{document}